\documentclass[12pt, english]{amsart}

\usepackage{general}

\title[A conjecture by Kollár and Saccà]{On a conjecture by Kollár and Saccà}

\author[P. Autissier]{Pascal Autissier}
\address{Univ. Bordeaux, CNRS, Bordeaux INP, IMB, UMR 5251, F-33400 Talence, France}
\curraddr{}
\email{pascal.autissier@math.u-bordeaux.fr}

\author[A. Fanelli]{Andrea Fanelli}
\address{Univ. Bordeaux, CNRS, Bordeaux INP, IMB, UMR 5251, F-33400 Talence, France}
\curraddr{}
\email{andrea.fanelli@math.u-bordeaux.fr}

\subjclass[2020]{}
\thanks{}
\date{\today}

\begin{document}


\begin{abstract}
	In this note we study smooth commutative group schemes over curves, whose generic fibre is an abelian variety. We prove a modified version of the conjecture proposed in \cite{KS25}.
\end{abstract}

\maketitle
\tableofcontents


\section*{Introduction}

The goal of this note is prove the following statement for smooth commutative group schemes over smooth curves, conjectured by Kollár and Saccà in \cite{KS25}.

\begin{theorem*}(=\autoref{cor_main})
Let $C$ be a geometrically connected, smooth projective curve over a field $k$, and $A/C$
a smooth commutative group scheme with connected fibres, whose generic fibre $A_\eta$ is an abelian
variety which has no abelian subvarieties defined over $k$. 
\\Let $Z_0$ be the zero section and $MW(A/C)$ the Mordell-Weil group. Then the following set map
$$\MW(A/C)/{\tors} \to  N_{1}(A)$$
$$Z \mapsto [Z]-[Z_0]$$
is injective.
\end{theorem*}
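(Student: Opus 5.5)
Let $Z$ be a section whose class satisfies $[Z]=[Z_0]$ in $N_1(A)$. We must show $Z$ is torsion in $\MW(A/C)$. The approach is to test the numerical class against divisors on $A$, or more precisely on a suitable compactification, and extract a height pairing. Since $A$ need not be proper, we first work with line bundles on $A$ whose restriction to $A_\eta$ is symmetric and ample. Choose an ample symmetric line bundle $L_\eta$ on $A_\eta$ and extend it to a line bundle $L$ on $A$; this is possible since $A$ is smooth, hence regular, so Weil divisors are Cartier. Then $\deg(L|_Z)-\deg(L|_{Z_0})=0$ for every such $L$. Here $Z,Z_0\cong C$ are proper, so these degrees make sense and only depend on numerical classes.

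The key step is to relate $\deg(L|_Z)$ to the Néron–Tate height $\hat h_{L_\eta}(z)$ of the generic point $z\in A_\eta(K)$, $K=k(C)$. The relation should hold up to bounded local correction terms. To make this exact, I replace $L$ by $n$-multiples via the isogeny structure: consider $[n]^*L$ versus $L^{\otimes n^2}$. These agree on the generic fibre since $L_\eta$ is symmetric, so they differ by a divisor supported on finitely many fibres. Because the fibres of $A$ are connected and irreducible (smooth and connected), such a vertical divisor is a $\mathbb{Z}$-combination of whole fibres $A_c$. Its degree on any section $Z$ is then independent of $Z$, since each section meets each fibre with intersection number $[k(c):k]$. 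Hence, applying the hypothesis to $L$ and to the sections $nZ$, we get
\[
\deg(L|_{nZ})-\deg(L|_{Z_0}) \;=\; n^2\bigl(\deg(L|_Z)-\deg(L|_{Z_0})\bigr) \;=\;0,
\]
which identifies the difference with $\hat h_{L_\eta}(z)$ after the usual Tate limit. Thus $\hat h_{L_\eta}(z)=0$.

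Finally, I conclude with the function-field Néron–Tate theory (Lang–Néron): $\hat h(z)=0$ for ample symmetric $L_\eta$ implies $z$ lies in the $K/k$-trace part modulo torsion. The trace $\mathrm{Tr}_{K/k}(A_\eta)$ vanishes precisely by the hypothesis that $A_\eta$ has no abelian subvariety defined over $k$. This hypothesis should be read correctly, perhaps after passing to $\bar k$; the paper's "modified" version presumably handles this point. Hence $z$ is torsion. Since restriction $\MW(A/C)\to A_\eta(K)$ is injective ($A$ separated), $Z$ is torsion.

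The main obstacle I expect is the second step, controlling the vertical discrepancy. The divisor of $[n]^*L\otimes L^{-n^2}$ is vertical only generically. We need that every vertical divisor on $A$ is a combination of full fibres, which uses connectedness of fibres, and that this is compatible across sections. A second difficulty is that the hypothesis is numerical equivalence in $N_1(A)$ for a non-proper $A$. I would handle this by pairing only with Cartier divisors, which is legitimate since the curves are proper. In the inseparable case, the trace/Lang–Néron input may need care.
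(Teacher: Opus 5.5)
There is one genuine gap, in your very first sentence. You reduce injectivity to ``$[Z]=[Z_0]$ in $N_1(A)$ implies $Z$ torsion''. But $Z\mapsto[Z]-[Z_0]$ is not a homomorphism (see the paper's $[2]_*Z$ example), so a trivial fibre over $0$ does not by itself give injectivity. What you need is $[Z_1]=[Z_2]\Rightarrow Z_1\ominus Z_2$ torsion for arbitrary sections. The paper gets this from additivity of $\psi$ with values in $N_{1,\mathrm{gen}}(A)$.

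In your set-up the missing step is one line. Translation $t$ by the section $\ominus Z_2$ is an automorphism of the $C$-scheme $A$, and $(M\cdot t_*\gamma)=(t^*M\cdot\gamma)$ for all $M\in\mathrm{Pic}(A)$, so $t_*$ preserves numerical equivalence. Since $t(Z_1)=Z_1\ominus Z_2$ and $t(Z_2)=Z_0$, you get $[Z_1\ominus Z_2]=[Z_0]$, and your argument applies.

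Two smaller points.
\begin{itemize}
\item Your ``bounded local correction'' step should be argued, not hoped for. Your own observation helps here: two extensions of $L_\eta$ to $A$ differ by some $p^*M$, hence by a constant on sections. So it suffices to exhibit one extension whose degrees on sections are a Weil height for $L_\eta$ up to $O(1)$, for instance via a proper $C$-model containing $A$, or directly the identity $(\mathcal{L}\cdot Z)=\hat h_{\mathcal{L}_\eta}(Z_\eta)$ that the paper quotes from Lang. Your exact relation for $nZ$ then upgrades this to $\deg(Z^*L)-\deg(Z_0^*L)=\hat h_{L_\eta}(Z_\eta)$ on all sections.
\item The statement presupposes that the map is well defined on $\mathrm{MW}(A/C)/\mathrm{tors}$, and you do not check this. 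It follows from this identity together with the analogous one for $\mathrm{Pic}^0$-type bundles, since $L_\eta^{\otimes 2}$ is a symmetric class plus a $\mathrm{Pic}^0$ class and Néron--Tate heights are invariant under torsion translation.
\end{itemize}

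With these additions your proof is correct and takes a genuinely different route from the paper's. The paper proves the stronger Theorem~\ref{main_thm}: pairing only against bundles with $L_\eta\in\mathrm{Pic}^0(A_\eta)$ already determines sections modulo torsion, and $\psi$ is additive. The paper obtains additivity from the Breen--Moret-Bailly equivalence of cubist torsors, which gives the theorem of the square on $A$ itself (Proposition~\ref{prop_main}(3)). The corollary then follows because numerical equivalence implies generic numerical equivalence.

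You use the full $N_1(A)$ instead. Translations reduce everything to comparison with $Z_0$, and a single symmetric ample class detects $\hat h$. Your only geometric input is that vertical divisors are pulled back from $C$, and no cubist structures are needed. This is more economical for the corollary. However, it says nothing about $N_{1,\mathrm{gen}}(A)$, where ample classes are not among the test bundles, nor about the decomposition $2\phi=b+\ell$.

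The last step, the Lang--Néron description of $\ker\hat h$ plus vanishing of the $K/k$-trace, is identical in both. Note that the paper's ``modified'' version refers to $N_{1,\mathrm{gen}}$, not to the trace hypothesis, which the paper uses exactly as you do.
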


We remark that the previous natural map is \emph{not} a group homomorphism and to prove the injectivity-modulo-torsion we introduce a weakened version of numerical equivalence on 1-cycles (see \autoref{def_N1gen} and \autoref{main_thm}).

The key inputs of our approach are
\begin{itemize}
\item extension results for cubist $\mathbb{G}_{m}$-torsors on connected group schemes (see \autoref{sec_cubist}); and 
\item the theory of heights developed for the proof of the Lang-Néron theorem (see \cite[Chapter~6]{L83}).
\end{itemize}

\subsection*{Acknowledgements} The  authors  would  like  to  thank  János Kollár and Giulia Saccà for their comments on the first version of this note. AF is supported by the ANR project ``FRACASSO''  ANR- 22-CE40-0009-01.

\section{Extensions of cubist invertible sheaves} \label{sec_cubist}
Let $k$ be a field and $S$ an irreducible normal scheme over $k$. In this note we denote by $A$ a smooth commutative group scheme over $S$. 
If we assume that the generic fibre $A_\eta$ is an abelian variety, all line bundles on $A_\eta$ verify the Theorem of the Cube \cite[Corollary~2, page 58]{M70}. In particular, it is natural to ask under which conditions on $A$ (and $S$), a line bundle $\L_\eta$ on $A_\eta$ can be extended to $\L$ on $A$ still verifying the Theorem of the Cube. It turns out, after the work of Breen \cite{B83} (see also \cite{M85}), that when $S$ is a curve and $A/S$ has connected fibres, a (unique) \emph{cubist} extension exists for any line bundle on $A_\eta$.  

The notion of $G$-torsor endowed with a cubist structure (or cubist $G$-torsor), where $G$ is a commutative group scheme, is rather general and we will introduce here only what is relevant for our work (see \cite[Ch.~I, Definition~2.4.5]{M85}).

\begin{definition}
Let $S$ be a scheme, $\mathbb{G}_{m,S}$ the multiplicative group scheme over $S$ and $X$ a commutative $S$-group scheme. A \emph{cubist structure} on an invertible sheaf $\L$ of $X$ (or $\mathbb{G}_{m,S}$-torsor on $X/S$) is the data of a section $\tau$ of the torsor
$$\theta(\L):= m_{1,2,3}^*\L \otimes m_{1,2}^*\L^\vee \otimes m_{1,3}^*\L^\vee \otimes m_{2,3}^*\L^\vee \otimes m_{1}^*\L \otimes m_{2}^*\L \otimes m_{3}^*\L$$
on $X^3$, where $m_I\colon X^3 \to X$ is the sum of projections corresponding to $I\subseteq \{1,2,3\}$.
The category of cubist $\mathbb{G}_{m,S}$-torsors on $X$ is denoted by $\CUB(X,\mathbb{G}_{m,S})$.
\end{definition}

We recall the classical notion of rigidification for line bundles.

\begin{definition}
Let $k$ be a field, $S$ an irreducible normal scheme over $k$ and $A/S$ a smooth commutative group scheme, with connected fibres. Let $Z_0$ be the zero section. An invertible sheaf $\L$ on $A$ is \emph{rigidified} if $\L|_{Z_0}\cong \O_{Z_0}$. The group of rigidified invertible sheaves on $A$ modulo isomorphism is denoted by $\Pic(A)_{\rig}$.
\end{definition}

\begin{proposition}\label{prop_main}
Let $C$ be a smooth projective curve over $k$ and $A/C$ a smooth commutative group scheme with connected fibres. Assume that the generic fibre $A_\eta$ is an abelian variety. Then
\begin{enumerate}
    \item\label{item_lift} the restriction $$\Pic(A)_{\rig} \to \Pic(A_\eta)$$ is a group isomorphism.
\item\label{item_sum} For any $\L\in \Pic(A)_{\rig}$ the following holds:
$$\L_\eta\in \Pic^\circ(A_\eta) \iff [-1]^*\L \cong \L^\vee \text{(i.e.\ $\L$ is odd)}.$$
\item\label{item_square} Let $\L\in \Pic(A)$ be a line bundle such that $\L_\eta\in \Pic^\circ(A_\eta)$, and let $Z_1$ and $Z_2$ be two sections of $A/C$, and $Z_3:=Z_1\oplus Z_2$ in $\MW(A/C)$ with corresponding translation morphisms $\tau_i\colon A\to A$, for $i=1,2,3$.
Then
$$\tau_1^*\L \otimes \tau_2^*\L \cong \tau_3^*\L \otimes \L$$
in $\Pic(A)$.
\end{enumerate}
\end{proposition}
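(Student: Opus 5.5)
The plan is to treat (1) as the foundation and derive (2) and (3) from it. Throughout, cubist structures are the bridge between rigidified line bundles on $A$ and on $A_\eta$.

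\textbf{Item (1).} I would use the extension theorem of Breen and Moret-Bailly \cite{B83,M85}. For $S$ a normal curve (here $C$) and $A/S$ smooth with connected fibres, restriction gives an equivalence $\CUB(A,\mathbb{G}_{m,C}) \to \CUB(A_\eta,\mathbb{G}_{m,\eta})$. On the abelian variety $A_\eta$, the theorem of the cube \cite{M70} gives every rigidified line bundle a unique cubist structure, so $\CUB(A_\eta)$ is $\Pic(A_\eta)$. It then remains to identify isomorphism classes in $\CUB(A,\mathbb{G}_{m,C})$ with $\Pic(A)_{\rig}$.

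\emph{Surjectivity.} The cubist extension of $\L_\eta$ is in particular rigidified, since cubist torsors are trivial along $Z_0$.

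\emph{Injectivity.} Let $\L$ be rigidified with $\L_\eta$ trivial. Since $A$ is regular (smooth over the regular curve $C$), $\L\cong\O_A(D)$ for a divisor $D$ supported on fibres. Because the fibres are irreducible, being smooth and connected, $D=\pi^*E$ for a divisor $E$ on $C$. The rigidification then gives $\O_C(E)\cong Z_0^*\L\cong\O_C$, so $\L$ is trivial.

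The same fibre-divisor argument shows that two line bundles on $A$ with isomorphic generic fibres differ by $\pi^*M$ with $M\in\Pic(C)$. This reduces every isomorphism claim on $A$ to checking it generically and then checking it along a section.

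\textbf{Item (2).} Consider $\mathcal M:=[-1]^*\L\otimes\L$, which is rigidified. By (1), $\mathcal M$ is trivial if and only if $\mathcal M_\eta=[-1]^*\L_\eta\otimes\L_\eta$ is trivial. On an abelian variety this holds exactly when $\L_\eta\in\Pic^\circ$. The forward direction is standard \cite{M70}. For the converse, $\L_\eta$ is antisymmetric, so $\phi_{\L_\eta}=-\phi_{[-1]^*\L_\eta}=-\phi_{\L_\eta}$, hence $2\phi_{\L_\eta}=0$ and $\phi_{\L_\eta}=0$, which means $\L_\eta\in\Pic^\circ$.

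\textbf{Item (3).} Set $\mathcal N:=\tau_1^*\L\otimes\tau_2^*\L\otimes\tau_3^*\L^\vee\otimes\L^\vee$. On $A_\eta$, the theorem of the square for $\L_\eta$ shows that $\mathcal N_\eta$ is trivial; in fact, for $\L_\eta\in\Pic^\circ$ every translate $t_x^*\L_\eta$ is isomorphic to $\L_\eta$. By the remark above, $\mathcal N\cong\pi^*M$ with $M\cong Z_0^*\mathcal N$, so we need $Z_0^*\mathcal N$ trivial. Computing, $Z_0^*\mathcal N = Z_1^*\L\otimes Z_2^*\L\otimes Z_3^*\L^\vee\otimes Z_0^*\L^\vee$. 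We may replace $\L$ by $\L\otimes\pi^*(Z_0^*\L)^\vee$, which does not change $\mathcal N$ because the four $\pi^*$ factors cancel, and so assume $\L$ rigidified. By (2), $\L$ is then cubist and odd.

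For a cubist $\L$, the cube structure $\tau$ pulled back along $(Z_1,Z_2,-Z_2)$ (or a similar combination) gives the "theorem of the square" identity for sections: the map $Z\mapsto Z^*\L$ is quadratic, and for odd $\L$ it is linear, $\MW(A/C)\to\Pic(C)$. Concretely, the quadratic form $q(Z)=Z^*\L$ satisfies $q(Z_1+Z_2)-q(Z_1)-q(Z_2)=(Z_1,Z_2)^*\Lambda(\L)$, where $\Lambda(\L)$ is the biextension. Oddness kills the quadratic part, since $q(-Z)=-q(Z)$ and $q(0)=0$. Alternatively, $\Lambda(\L)$ is trivial: it is a rigidified line bundle on $A\times_C A$ whose generic fibre $\Lambda(\L_\eta)$ is trivial for $\L_\eta\in\Pic^\circ$, and (1) applied to the group scheme $A\times_C A$ shows it is trivial. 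With this, $Z_1^*\L\otimes Z_2^*\L\cong Z_3^*\L$ and $Z_0^*\L$ is trivial, which finishes the proof.

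\textbf{Main obstacle.} The hard step is (1), namely the existence of the cubist extension. I would quote \cite[Ch.~I]{M85} / \cite{B83} for this rather than reprove it, checking only that their hypotheses (normal base of dimension one, connected fibres) match the setting here.
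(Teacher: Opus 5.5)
Your proposal is correct and follows essentially the paper's route. Item (1) comes from the Breen--Moret-Bailly extension of cubist torsors. Item (2) follows from (1) plus the antisymmetry characterization of $\Pic^\circ(A_\eta)$. Item (3) follows from the triviality of the bundle $\Lambda(\L)=m^*\L\otimes p_1^*\L^\vee\otimes p_2^*\L^\vee$ on $A\times_C A$, obtained by applying (1) to the group scheme $A\times_C A$. Your vertical-divisor argument for injectivity in (1) makes explicit a step the paper leaves implicit in ``composing with the forgetful functor''.

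One caution about (3): your first argument, ``oddness kills the quadratic part'', is not enough on its own. Write $q(Z)=Z^*\L$ and $B(Z,W)=q(Z\oplus W)-q(Z)-q(W)$. The relations $q(0)=0$ and $q(-Z)=-q(Z)$ only give $B(Z,Z)=0$. Since $B$ is symmetric, this yields $2B=0$, not $B=0$. Because $\Pic(C)$ may contain $2$-torsion, linearity of $q$ does not follow. What actually carries the proof is your alternative: triviality of $\Lambda(\L)$ via (1) on $A\times_C A$, pulled back along $(Z_1,Z_2)$. That is exactly the paper's key input.

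The paper is also slightly more economical. It trivializes $p_1^*\L\otimes p_2^*\L\otimes(p_1+p_2)^*\L^\vee\otimes 0^*\L^\vee$ on $A\times_C A$ without first rigidifying $\L$. It then pulls back along suitable pairs of maps $A\to A$, such as $(\tau_1,Z_2\circ\pi)$ and $(\mathrm{id},Z_2\circ\pi)$, and so gets the isomorphism on $A$ directly. No reduction to $Z_0^*\mathcal N$ is needed.
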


\begin{proof}
Our hypothesis on the base $C$ guarantee the existence and unicity of cubist extensions, i.e.\ the restriction functor 
\begin{equation}\label{eq_cub}
    \CUB(A,\mathbb{G}_{m,C}) \to \CUB(A_\eta,\mathbb{G}_{m,\eta})
\end{equation}
is an equivalence of categories (see \cite[Ch.~2, Theorem 1.1]{M85}). Composing with the forgetful functor $$\CUB(A,\mathbb{G}_{m,C}) \to \TORSRIG(A,\mathbb{G}_{m,C}),$$ where $\TORSRIG(A,\mathbb{G}_{m,C})$ is the category of rigidified (i.e.\ trivialised at the zero section) $\mathbb{G}_{m,C}$-torsors, we obtain \autoref{item_lift}.
\\The theory of abelian varieties gives: $\L_\eta\in \Pic^\circ(A_\eta)$ if and only if $[-1]^*\L_\eta \cong \L_\eta^\vee$ (see \cite[Ch.~5, Proposition~2.3]{L83}). So \autoref{item_lift} implies \autoref{item_sum}.
\\Let $\L\in \Pic(A)$. Let $p_1$ and $p_2$ the two projections from $A\times A$. Restricting to the generic fibre, $\L_\eta\in \Pic^\circ(A_\eta)$ implies that $(p_1+p_2)^*\L_\eta \cong p_1^*\L_\eta \otimes p_2^*\L_\eta$ on $A_\eta\times A_\eta$, so, again by \autoref{eq_cub}, $p_1^*\L \otimes p_2^*\L \cong (p_1+p_2)^*\L \otimes 0^*\L.$
Pulling-back the previous equation via $(f,g)$, where $f,g\colon A \to A$ are morphisms, we get 
$$f^*\L \otimes g^*\L \cong (f+g)^*\L \otimes 0^*\L.$$
This implies \autoref{item_square}.
\end{proof}

\section{The conjecture} \label{sec_conjecture}

In \cite{KS25}, the authors prove the following rigidity result, motivated by the work \cite{BFK25}.

\begin{theorem}\cite[Proposition~1]{KS25}\label{thm_KS}
Let $S$ be a smooth, projective surface over $\mathbb{C}$ such that $\Pic(S) = \Z[H]$, where $|H|$ is basepoint-free, and members of $|H|$ have at worst nodes in codimension
1 on $|H|$. Let $p \colon J(S, H) \to |H|$ be the universal compactified Jacobian,
$L \subset |H|$ a general line, $J_L := p^{-1}
(L)$, and $g$ the genus of the curves in $|H|$. Let $Z \subset J_L$ be a section whose cohomology class is contained in the image of
the restriction map 
$$H^{2g}(J(S, H), \Z) \to H^{2g}(J_L, \Z).$$
Then $Z$ is the zero section.
\end{theorem}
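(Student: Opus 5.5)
The plan is to use the height machinery mentioned in the introduction: turn the cohomological hypothesis on $Z$ into the vanishing of the Néron--Tate height of the point of $\MW(J_L/L)$ defined by $Z$, and then remove the remaining torsion by a monodromy argument. Write $U\subset |H|$ for the locus of smooth members and $\eta$ for the generic point of $L\cong\mathbb{P}^1$. The fibration $J_L\to L$ has generic fibre the Jacobian $J_\eta$ of a smooth genus $g$ curve. The zero section $Z_0$ of $J_L$ is the restriction of the zero section $\widetilde Z_0$ of $J(S,H)\to|H|$, which exists because $J(S,H)$ parametrises degree-$0$ sheaves. Hence $[Z_0]$ lies in the image of $H^{2g}(J(S,H),\Z)$, and so does $[Z]-[Z_0]=\alpha|_{J_L}$ for some class $\alpha$.

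\emph{Step 1 (divisors come from $J(S,H)$).} I would first show that $\NS(J_\eta)\otimes\mathbb{Q}=\mathbb{Q}\,\Theta_\eta$ and that $J_\eta$ has no abelian subvariety defined over $\C$. For a general line $L$, Lefschetz gives a surjection $\pi_1(L\cap U)\twoheadrightarrow\pi_1(U)$. The monodromy of the universal family of curves in $|H|$ on $H^1(C_t,\Z)$ is the full symplectic group, by the nodal degenerations together with $\Pic(S)=\Z[H]$ and Picard--Lefschetz. An endomorphism or Néron--Severi class of $J_\eta$ gives a monodromy-invariant element of $\mathrm{End}(H^1)$ or of $\wedge^2 H^1$, so the only classes are multiples of $\Theta_\eta$ and of the identity. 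The relative symmetric theta divisor extends to a line bundle $\widetilde{\Theta}$ on $J(S,H)$, for instance as a determinant-of-cohomology bundle. Let $\L$ be the rigidified extension of $\Theta_\eta$ to $J_L^{\mathrm{sm}}$ given by \autoref{prop_main}\autoref{item_lift}; it differs from $\widetilde\Theta|_{J_L}$ only by vertical divisors and a pullback from $L$.

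\emph{Step 2 (heights).} By the Lang--Néron theory, the Néron--Tate height of the point $P_Z\in J_\eta(k(L))$ is computed, up to a bounded error, by the intersection numbers $Z\cdot\L$. More precisely, $2\hat h_{\Theta}(P_Z)=Z\cdot\L+[-1]^*Z\cdot\L-2Z_0\cdot\L$ plus fibre-correction terms. Those terms only depend on the component of the (finitely many) singular fibres that $Z$ meets, and they vanish once we replace $Z$ by $mZ$ for $m$ killing the component groups. The key computation is then
$$(Z-Z_0)\cdot\widetilde\Theta|_{J_L}=\alpha\cdot\widetilde\Theta\cdot[J_L],$$
by the projection formula. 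I would run it for all multiples and use \autoref{prop_main}\autoref{item_square} to check that the resulting expression is quadratic in $P_Z$. Combining it with the symmetry of $\widetilde\Theta$ and the deformation of $L$ among general lines, the aim is to conclude that $\hat h(P_Z)$ is determined by $\alpha$ restricted to \emph{any} general line. I expect this to be the main obstacle: one has to show that a class pulled back from the whole $|H|$ pairs trivially with the "height part". My first attempt would be a monodromy-invariance argument on $H^{2g}$. The projective bundle $\{(L,x):x\in L\}$ over the Grassmannian of lines lets $J_L$ vary, and the section $Z$ does not spread out while $\alpha$ does, so the pairing must factor through $H^{2g}(J(S,H))$. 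There it sees only $\widetilde Z_0$ up to classes supported on vertical and boundary loci, which gives $\hat h(P_Z)=\hat h(0)=0$. Since $\Theta_\eta$ is ample, this makes $P_Z$ torsion.

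\emph{Step 3 (no torsion).} A torsion section $Z$ of order $n$ gives a point of $J_\eta[n]$ over $k(L)$, hence a vector in $H^1(C_t,\Z/n)$ fixed by $\pi_1(L\cap U)$. Since this group surjects onto $\pi_1(U)$, whose image is $\mathrm{Sp}_{2g}(\Z)$ (or at least a subgroup acting on $(\Z/n)^{2g}$ without nonzero fixed vectors), the point must be $0$. Therefore $Z=Z_0$.
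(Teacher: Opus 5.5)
Your proposal has a genuine gap in Step 2, at exactly the point you flag as the main obstacle. What your computation actually gives is $\hat h(P_Z)=(\mathcal{L}\cdot(Z-Z_0))=\widetilde\Theta\cdot\alpha\cdot[J_L]$, so the height is a number determined by $\alpha$. (The vertical correction drops out because a fibre meets $Z$ and $Z_0$ equally.) Nothing you say forces this number to be $0$:
\begin{itemize}
\item $\alpha$ is an arbitrary class in $H^{2g}(J(S,H),\Z)$, and the claim that it ``sees only $\widetilde Z_0$ up to vertical and boundary classes'' has no justification.
\item ``$Z$ does not spread out'' is false locally. The sections $E_i-E_j$ given by differences of the $r=H^2$ base points of the pencil clearly deform as $L$ moves.
\item Independence of $L$ is useless by itself, since $Z$ and all its monodromy translates have the same height anyway.
\end{itemize}
Also, the monodromy you do use, $\pi_1(L\cap U)$ acting on $H^1(C_t)$, only controls the generic fibre $J_\eta$ (trace, N\'eron--Severi group, torsion points). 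It cannot detect non-torsion sections, which is precisely what Step 2 has to rule out.

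The missing idea is the monodromy of the family of \emph{lines} acting on $\MW(J_L/L)\cong\Z^{r-1}$. This is what the paper, following \cite{KS25}, means by ``a monodromy argument combined with an injectivity statement''. Moving $L$ permutes the base points transitively. $\MW(J_L/L)\otimes\mathbb{Q}$ is the reduced permutation representation $\{\sum a_iE_i:\sum a_i=0\}$, which has no nonzero invariants. The class $[Z]-[Z_0]=\alpha|_{J_L}$ is invariant under this monodromy. The map $Z\mapsto[Z]-[Z_0]$ is equivariant and injective into $N_1(J_L)$, by \autoref{cor_main} together with torsion-freeness of $\MW$ (\cite{S99}, or your Step 3). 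Hence $Z$ is itself invariant, so $Z=Z_0$.

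If you want to keep your height formulation, make it bilinear. For $W\in\MW(J_L/L)$, let $\mathcal{M}_W$ be the rigidified cubist extension of the $\Pic^\circ$ class attached to $W$ via $\Theta_\eta$. Then, up to normalisation, $\langle P_W,P_Z\rangle=(\mathcal{M}_W\cdot\alpha|_{J_L})$, and this is unchanged under $W\mapsto\gamma W$ for $\gamma$ in the monodromy. Summing over the finite image of the monodromy gives $\langle P_W,P_Z\rangle=0$ for every $W$, in particular $\hat h(P_Z)=0$.
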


The proof reduces to a monodromy argument combined with an injectivity statement for the map
\begin{equation}\label{eq_1}
\MW(J_L/L)\cong \Z^{r-1} \to N_1(J_L)
\end{equation}
$$Z\mapsto [Z]-[Z_0]$$
where $N_1$ denotes the group of complete $1$-cycles modulo numerical equivalence. 

One is induced to consider the previous map in a generalised setting (see \cite[Conjecture~7]{KS25}). First, we remark that the map \autoref{eq_1} is \emph{not} a group homomorphism.

\begin{example}
Keep the same hypothesis as in \autoref{thm_KS}. Choose any non-zero $Z\in \MW(J_L/L)$ (the $K/\mathbb{C}$-trace of $J_L$ is automatically trivial and $\MW(J_L/L)$ is torsion-free, see \cite[Theorem~3]{S99}) and any $\L\in \Pic(J_L)$ relatively ample over $L$, rigidified and even (i.e.\ such that $[-1]^*\L\cong\L$). 
We know that $[2]^*\L=\L^{\otimes 4}$. Moreover, we remark that $(\L\cdot Z)= h_\L(Z_\eta)$, where $h_\L$ is the canonical Néron-Tate height (see \cite[Ch.~12, Proposition~3.5]{L83}), so $(\L\cdot Z)>0$. By the projection formula, $([2]^*\L\cdot Z)=(\L\cdot [2]_* Z)$ and we deduce that
$$(\L\cdot [2]_* Z)=4(\L\cdot Z).$$
Since $(\L\cdot 2Z)=2(\L\cdot Z)<4(\L\cdot Z)$, we have proved that 
$$2[Z]\neq [[2]_*Z]+[Z_0] \ \text{ in } N_1(J_L).$$
In particular, the map \autoref{eq_1} is not a group homomorphism.
\end{example}

\noindent It turns out that the previous example explains the only obstruction for \autoref{eq_1} to be a homomorphism.

\subsection{The natural map}
Let $p \colon A \to C$ be a smooth commutative group scheme over a smooth projective curve $C$ with connected fibres. Consider the natural set map \autoref{eq_1} in this general setting:
\begin{equation}\label{eq_2}
\phi\colon \MW(A/C) \to N_1(A)
\end{equation}
$$Z\mapsto [Z]-[Z_0]$$
This map is quadratic in the following sense: if we define
$$b\colon \MW(A/C)\times \MW(A/C) \to N_1(A)$$
$$(W,Z) \mapsto [W\oplus Z] + [Z_0] -[W] -[Z]$$
the Theorem of the Cube implies that $b$ is bilinear. Moreover the map
$$\ell\colon \MW(A/C)\to N_1(A)$$
$$Z \mapsto 4[Z] - [Z \oplus Z] - 3[Z_0]$$
is linear, applying \autoref{prop_main}(3) to the line bundle $\L^{\otimes 4}\otimes [2]^*\L^\vee$. By construction,
$$2\phi(Z)=b(Z,Z)+\ell(Z),$$
for all $Z\in\MW(A/C)$.

\noindent In order to obtain a linear map from \autoref{eq_2}, we define a weakened version of numerical equivalence.

\begin{definition}\label{def_N1gen}
Let $p \colon A \to C$ be a smooth commutative group scheme with connected fibres. The \emph{generic numerical equivalence} on (complete) $1$-cycles is defined as
$$Z\equiv_{\gen}0 \text{ if } \ (\L\cdot Z)=0 \ \text{for all } \L \in \Pic(A) \text{ such that\ } \L_{\eta} \in \Pic^\circ(A_{\eta}).$$
The group of complete $1$-cycles modulo generic numerical equivalence is denoted by $N_{1,\gen}(A)$.
\end{definition}

\noindent Our main result is the following.

\begin{theorem}\label{main_thm}
Let $C$ be a geometrically connected, smooth projective curve over a field $k$, and $A/C$
a smooth commutative group scheme with connected fibres, whose generic fibre $A_\eta$ is an abelian
variety. Let $Z_0$ be the zero section. Then the following holds.
\begin{enumerate}
\item The map \begin{equation}
\psi\colon \MW(A/C) \to  N_{1,\gen}(A)
\end{equation}
$$Z \mapsto [Z]-[Z_0]$$
is a group homomorphism.
\item Assume that $A_\eta$ has no abelian subvarieties defined over $k$, then $\ker \psi = \MW(A/C)_{\tors}$.
\end{enumerate}
\end{theorem}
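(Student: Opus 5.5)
Part (1) is a direct consequence of \autoref{prop_main}(3) and the projection formula. Take $Z_1,Z_2\in\MW(A/C)$, put $Z_3=Z_1\oplus Z_2$, and let $\tau_i$ be the translation by $Z_i$. For $\L\in\Pic(A)$ with $\L_\eta\in\Pic^\circ(A_\eta)$, the section $Z_i$ is the image of $Z_0$ under $\tau_i$, so the projection formula gives $(\L\cdot Z_i)=(\tau_i^*\L\cdot Z_0)$. Now intersect the isomorphism $\tau_1^*\L\otimes\tau_2^*\L\cong\tau_3^*\L\otimes\L$ with $Z_0$. This yields
$$(\L\cdot Z_1)+(\L\cdot Z_2)=(\L\cdot Z_3)+(\L\cdot Z_0),$$
which says exactly that $[Z_1\oplus Z_2]+[Z_0]-[Z_1]-[Z_2]\equiv_{\gen}0$. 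Hence $\psi$ is additive. It follows at once that $\MW(A/C)_{\tors}\subseteq\ker\psi$, since $N_{1,\gen}(A)$ is torsion-free: it is a subgroup of $\Hom(\Pic(A),\Z)$.

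For (2), the reverse inclusion requires a pairing that detects points of infinite order. Let $Z\in\ker\psi$, so $(\L\cdot Z)=(\L\cdot Z_0)$ for every $\L$ with $\L_\eta\in\Pic^\circ(A_\eta)$. I would pass to the dual abelian variety $A_\eta^\vee=\Pic^\circ_{A_\eta}$ and use the Néron–Tate height theory from \cite[Ch.~6]{L83}. By \autoref{prop_main}(1), each $K$-rational class $\L_\eta\in\Pic^\circ(A_\eta)=A^\vee_\eta(K)$ (here $K=k(C)$) has a unique rigidified cubist extension $\L$, and $\L$ is odd by \autoref{prop_main}(2). I would then show that $(\L,Z)\mapsto(\L\cdot Z)-(\L\cdot Z_0)=(\L\cdot Z)$ (rigidified) equals, up to sign, the canonical Néron–Tate pairing $\langle Z_\eta,\L_\eta\rangle$ between $A_\eta(K)$ and $A^\vee_\eta(K)$. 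This is the function-field analogue of the identity $(\L\cdot Z)=h_\L(Z_\eta)$ used in the Example: the pairing is bilinear by part (1) in $Z$ and by the group isomorphism \autoref{prop_main}(1) in $\L$, and it differs from the naive Weil height pairing by a bounded amount. A bilinear function at bounded distance from zero vanishes, so it coincides with the canonical pairing.

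Granting this identification, $Z\in\ker\psi$ means $\langle Z_\eta,\xi\rangle=0$ for all $\xi\in A^\vee_\eta(K)$. Choose a polarization $\lambda\colon A_\eta\to A_\eta^\vee$ defined over $K$, for example via an ample symmetric line bundle $M_\eta$ with $\lambda=\phi_{M_\eta}$. Taking $\xi=\lambda(Z_\eta)$ gives $\langle Z_\eta,\lambda Z_\eta\rangle=2\hat h_{M}(Z_\eta)=0$. Since $A_\eta$ has trivial $K/k$-trace (it has no abelian subvarieties defined over $k$), the Lang–Néron theory says $\hat h_M$ is a positive definite quadratic form on $A_\eta(K)/\tors$. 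Hence $Z_\eta$ is torsion. Finally, $\MW(A/C)=A_\eta(K)$ by the valuative criterion: a section over $\eta$ extends uniquely over the smooth curve $C$, because $A$ is the Néron model, or at least its identity component, when $A/C$ has connected fibres. So $Z$ is torsion in $\MW(A/C)$. The definiteness argument has to be applied to a point of $A_\eta(K)$ whose section lies in $A$; this is fine, since the pairing computation only uses sections of $A$.

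The main obstacle is the identification of $(\L\cdot Z)$ with the canonical height pairing, and in particular the normalisation and the boundedness over the finitely many bad fibres where $A$ is not proper. My first attempt would use the fact that the rigidified cubist extension is canonical and is compatible with $[n]$, namely $[n]^*\L\cong\L^{\otimes n}$ for odd $\L$, which again follows from the equivalence \eqref{eq_cub}. Then $(\L\cdot [n]Z)=n(\L\cdot Z)$ exactly, and this is precisely the homogeneity that characterises the canonical pairing via Tate's limit. Comparing with any Weil height attached to an extension of $\L_\eta$ on a compactification then completes the identification.
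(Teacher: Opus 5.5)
Your proposal is correct and is essentially the paper's proof: part (1) is the same projection-formula argument via \autoref{prop_main}\autoref{item_square}, and part (2) uses the same identification $(\L\cdot Z)=h_{\L}(Z_\eta)$ for rigidified $\L$ (the paper cites it from \cite[Ch.~12, Proposition~3.5]{L83}, while you sketch it via linearity in $Z$ plus bounded distance from a Weil height), combined with Lang--Néron positivity for trivial trace, which you make explicit through $\langle P,\phi_M(P)\rangle=2\hat h_M(P)$. One side claim is false and should be dropped: $A$ need not be the Néron model or its identity component, and even the identity component can miss points of $A_\eta(K)$ that reduce to non-identity components, so $\MW(A/C)\to A_\eta(K)$ need not be surjective; your argument only uses its injectivity, which follows from separatedness of $A/C$ (also, $N_{1,\gen}(A)$ embeds in $\Hom$ of the subgroup $\{\L:\L_\eta\in\Pic^\circ(A_\eta)\}$ rather than of $\Pic(A)$, but it is still torsion-free).
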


\begin{proof}
To prove that $\psi$ is a group homomorphism, we follow the first part of the argument from \cite[§8]{KS25}. Let $Z_3=Z_1\oplus Z_2\in \MW(A/C)$, we want to show that $\psi(Z_1)+\psi(Z_2) \equiv_{\gen}\psi(Z_3)$, i.e.
\begin{equation}\label{eq_num}
    (\L\cdot Z_1)+(\L\cdot Z_2)=(\L\cdot Z_3)+(\L\cdot Z_0),
\end{equation}
for any $\L \in \Pic(A)$ such that $\L_{\eta} \in \Pic^\circ(A_{\eta})$. 
\\Let us denote by $\tau_i\colon A\to A$ the translation by $Z_i$, then the projection formula implies that \autoref{eq_num} is equivalent to 
$$(\tau_1^*\L \otimes \tau_2^*\L\otimes \tau_3^*\L^\vee\otimes \L^\vee\cdot Z_0)=0,$$
which holds true by \autoref{prop_main}\autoref{item_square}.
\\To describe the kernel of $\psi$, we recall once again that $(\L\cdot Z)= h_\L(Z_\eta)$, for all $\L\in\Pic(A)_{\rig}$, where $h_\L$ is the canonical Néron-Tate height (see \cite[Ch.~12, Proposition~3.5]{L83}). Let $Z\in \MW(A/C)$ verifying $(\L\cdot Z)=0$ for all $\L\in\Pic(A)_{\rig}$ such that $\L_\eta\in\Pic^\circ(A_\eta)$; then \cite[Ch.~6, Theorem~5.4.2)]{L83} implies that $Z\in \MW(A/C)_{\tors}$, since the $K/k$-trace vanishes by the hypothesis (we assumed that $A_\eta$ has no abelian subvarieties defined over $k$).
\end{proof}

\begin{corollary}\label{cor_main}
Let $A/C$ be as in \autoref{main_thm} and assume that $A_\eta$ has no abelian subvarieties defined over $k$. Then the set map
$$\MW(A/C)/{\tors} \to  N_{1}(A)$$
$$Z \mapsto [Z]-[Z_0]$$
is injective.
\end{corollary}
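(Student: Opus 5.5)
Since $\phi$ is not a homomorphism, the injectivity has to be checked directly on pairs. Suppose $Z,W\in\MW(A/C)$ satisfy $\phi(Z)=\phi(W)$ in $N_1(A)$, i.e.\ $[Z]-[Z_0]\equiv [W]-[Z_0]$ numerically. We want to show that $Z\ominus W$ is torsion.

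Numerical equivalence is finer than generic numerical equivalence, since it tests against all line bundles and not only those with $\L_\eta\in\Pic^\circ(A_\eta)$. Hence the numerical equality passes to $N_{1,\gen}(A)$ and gives $\psi(Z)=\psi(W)$. By \autoref{main_thm}(1), $\psi$ is a group homomorphism, so
$$\psi(Z\ominus W)=\psi(Z)-\psi(W)=0.$$
By \autoref{main_thm}(2), whose hypothesis is exactly ours, $Z\ominus W\in\MW(A/C)_{\tors}$. Therefore $Z$ and $W$ have the same class in $\MW(A/C)/\tors$.

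Strictly speaking, the map out of $\MW(A/C)/\tors$ is only well defined if $\phi$ is constant on torsion cosets, and it is not clear the paper means this. I would read the statement as asserting that the fibres of $\phi$ lie inside torsion cosets, which is exactly what was just proved.

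If one also wants $\phi$ constant on torsion cosets, this is the step requiring extra work. One would use the quadratic decomposition $2\phi(Z)=b(Z,Z)+\ell(Z)$ from \autoref{sec_conjecture}. Replacing $Z$ by $Z\oplus T$ with $T$ torsion of order $n$, bilinearity of $b$ and linearity of $\ell$ give
$$2n\,(\phi(Z\oplus T)-\phi(Z))=0,$$
so the difference is torsion in $N_1(A)$. Since $N_1(A)$ is torsion-free, being defined by integer-valued pairings, the difference vanishes.

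The only subtle point is therefore the comparison of the two equivalence relations, and that is immediate from the definitions.
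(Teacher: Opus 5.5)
Your proof is correct and is the paper's argument: numerical equivalence refines generic numerical equivalence, so \autoref{main_thm}(1) and (2) force $Z\ominus W$ to be torsion. Your extra check that $\phi$ is constant on torsion cosets is also sound. It uses $2\phi(Z)=b(Z,Z)+\ell(Z)$ together with the torsion-freeness of $N_1(A)$, and it supplies the ``if'' half of the paper's one-line equivalence, which does not follow from \autoref{main_thm} alone since that theorem only controls $N_{1,\gen}(A)$.
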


\begin{proof}
\autoref{main_thm} implies that, for any $Z_1,Z_2\in \MW(A/C)$,
$$[Z_1]=[Z_2] \text{ in } N_1(A) \iff Z_1\ominus Z_2\in \MW(A/C)_{\tors}.$$
The statement follows.
\end{proof}

\begin{remark}
The previous corollary is precisely the injectivity-modulo-torsion statement from \cite[Conjecture~7]{KS25}.
\end{remark}



\bibliographystyle{amsalpha}
\bibliography{biblio}

\end{document}